\newtheorem{theorem}{Theorem}
\newtheorem{lemma}[theorem]{Lemma}
\newtheorem{proposition}[theorem]{Proposition}
\newtheorem{problem}{Problem}
\newtheorem{conjecture}[problem]{Conjecture}
\newcommand\MN{{\cal N}}
\newcommand\MM{{\cal M}}
\newcommand\FF{{\mathbb F}}
\newcommand\OO{\mathcal{O}}
\DeclareMathOperator{\rank}{rank}
\DeclareMathOperator{\cdd}{cdd}
\DeclareMathOperator{\bd}{bd}
\DeclareMathOperator{\cd}{cd}
\DeclareMathOperator{\dd}{dd}
\DeclareTextCompositeCommand{\v}{OT1}{l}{l\nobreak\hspace{-.1em}'}
\DeclarePairedDelimiter\set{\{}{\}}
\DeclarePairedDelimiter\paren{(}{)}
\def\@fnsymbol#1{\ensuremath{\ifcase#1\or \dagger\or \ddagger\or%
\mathsection\or \mathparagraph\or \|\or **\or \dagger\dagger%
\or \ddagger\ddagger \else\@ctrerr\fi}}%
\begin{document}

\title{Branch-depth is minor closure of contraction-deletion-depth\thanks{The second and third authors have been supported by the project 24-11098S of the Czech Science Foundation.}}

\author{Marcin Bria\'nski\thanks{Theoretical Computer Science Department, Faculty of Mathematics and Computer Science, Jagiellonian University, Krak\'ow, Poland. E-mail: \texttt{marcin.brianski@doctoral.uj.edu.pl}.}\and
        Daniel Kr{\'a}\v{l}\thanks{Faculty of Informatics, Masaryk University, Botanick\'a 68A, 602 00 Brno, Czech Republic. E-mails: {\tt dkral@fi.muni.cz} and {\tt kristyna.pekarkova@mail.muni.cz}.}\and
\newcounter{lth}
\setcounter{lth}{3}
        Krist\'yna Pek\'arkov\'a$^\fnsymbol{lth}$}
\date{} 
\maketitle

\begin{abstract}
The notion of branch-depth for matroids was introduced by DeVos, Kwon and Oum 
as the matroid analogue of the tree-depth of graphs.
The contraction-deletion-depth, another tree-depth like parameter of matroids,
is the number of recursive steps needed to decompose a matroid by contractions and deletions to single elements.
Any matroid with contraction-deletion-depth at most $d$ has branch-depth at most $d$.
However, the two notions are not functionally equivalent
as contraction-deletion-depth of matroids with branch-depth two can be arbitrarily large.

We show that the two notions are functionally equivalent for representable matroids when minor closures are considered.
Namely, an $\FF$-representable matroid has small branch-depth if and only if
it is a minor of an $\FF$-representable matroid with small contraction-deletion-depth.
This implies that any class of $\FF$-representable matroids has bounded branch-depth if and only if 
it is a subclass of the minor closure of a class of $\FF$-representable matroids with bounded contraction-deletion-depth.
\end{abstract}

\section{Introduction}
\label{sec:intro}

Graph width parameters play an important role both in structural and algorithmic graph theory.
Indeed, graph tree-width, likely the best-known graph width parameter,
is one of the core notions in the Graph Minor Project of Robertson and Seymour.
The celebrated Courcelle's Theorem~\cite{Cou90} asserts that
every monadic second order property can be tested in linear times for graphs with bounded tree-width;
we refer the reader e.g. to the monograph~\cite{CygFKLMPPS15} for further algorithmic applications of tree-width and
other graph width parameters.
In this paper we study matroid branch-depth, which was introduced by DeVos, Kwon and Oum~\cite{DevKO20} as
the matroid analogue of tree-depth, a crucial structural graph measure with deep connections 
to sparsity theory of Ne{\v{s}}etril and Ossona de Mendez~\cite{NesO12},
graph product structure theorems~\cite{DvoW22}, clustered coloring~\cite{NorSW23}, and many other
areas of graph theory.
We remark that
every graph with small rank-depth is a vertex-minor of a graph with small tree-depth~\cite{HliKOO16}, and
graph classes with bounded rank-depth coincide with those with bounded shrub-depth~\cite{DevKO20},
which is a graph depth parameter inspired by the classical notion of graph clique-width.
Indeed, these two results are counterparts of more classical results concerning clique-width:
every graph with small clique-width is a pivot-minor of a graph with small tree-width~\cite{KwoO14}, and
graph classes with bounded clique-width coincide with those with bounded rank-width~\cite{OumS06} (also see~\cite{Oum05} for the relation of rank-width, pivot-minors and vertex-minors).

\emph{Branch-width} is generally considered to be the matroid analogue of graph tree-width, and
indeed Hlin\v en\'y~\cite{Hli03a,Hli03b,Hli06} proved the matroid analogue of Courcelle's Theorem
by showing that every monadic second order property can be tested in polynomial time
for matroids represented over a finite field that have bounded branch-width;
additional algorithmic results involving branch-width that parallel graph algorithms involving tree-width
can be found e.g.~\cite{GavKO12,HliO07,HliO08,JeoKO18}.
There are two matroid notions that can be considered to be analogues of graph tree-depth.
The first is the notion introduced in~\cite{KarKLM17},
which is now referred to as \emph{contraction$^*$-depth};
this parameter turned out to have an unexpected connection
to optimal tree-depth of constraint matrices in integer programs~\cite{ChaCKKP20,ChaCKKP22,BriKKPS22,BriKKPS24}.
The second is the notion of \emph{branch-depth}
introduced by DeVos, Kwon and Oum in~\cite{DevKO20} and further studied e.g. in~\cite{GolHMO21};
this notion is defined in a way completely analogous to graph rank-depth,
i.e., a matroid has small branch-depth iff
it can be represented by a shallow tree with all nodes corresponding to multicuts with small order (a formal definition
is given in Section~\ref{sec:notation}).

\begin{figure}
\begin{center}
\epsfbox{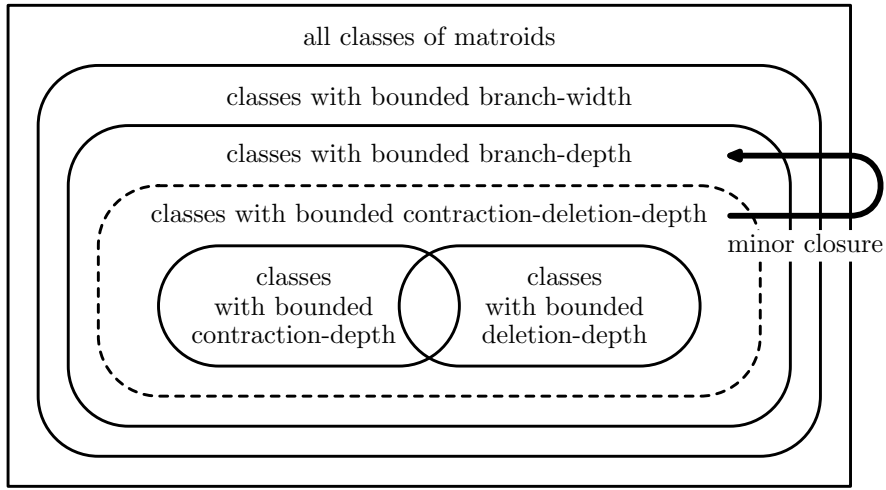}
\end{center}
\caption{The relation of classes of representable matroids with bounded depth and width parameters.
         The statement of Theorem~\ref{thm:class} is visualized by the arrow
	 relating classes with bounded contraction-deletion-depth and classes with bounded branch-depth.}
\label{fig:classes}
\end{figure}

We now briefly discuss matroid depth and matroid width parameters and their mutual relation,
which is visualized in Figure~\ref{fig:classes},
to put our results in the appropriate context.
In addition to branch-depth and branch-width,
it is possible to define the notion of \emph{contraction-depth},
which is the minimum depth of a procedure recursively splitting a given matroid by contractions to single elements,
the notion of \emph{deletion-depth},
which is the minimum depth of a procedure recursively splitting a given matroid by deletions to single elements, and
the notion of \emph{contraction-deletion-depth},
which is the minimum depth of a procedure recursively splitting a given matroid by contractions and deletions to single elements;
the formal definitions are given in Section~\ref{sec:notation}.
These notions were defined by Ding, Oporowski and Oxley~\cite{dinOO95} in 1995
although the names of contraction-depth, deletion-depth and contraction-deletion-depth were coined in~\cite{DevKO20};
concepts closely related to contraction-depth and deletion-depth were also discussed by Robertson and Seymour in~\cite{RobS85}.
We remark that, unlike branch-depth and branch-width,
none of contraction-depth, deletion-depth, and contraction-deletion-depth
is minor-monotone.

Classes of matroids with bounded contraction-depth coincide with classes of matroids with bounded contraction$^*$-depth.
In fact, the contraction$^*$-depth of a matroid $M$
is equal to the minimum contraction-depth of a matroid containing $M$ increased by one~\cite{BriKL23} (except for trivial cases
of matroids consisting of loops and bridges).
Classes of matroids with bounded contraction-depth do not need to have bounded deletion-depth and vice versa,
however,
every class of matroids with bounded contraction-depth has bounded contraction-deletion-depth, and
every class of matroids with bounded deletion-depth has bounded contraction-deletion-depth.
Furthermore, every class of matroids with bounded contraction-deletion-depth has bounded branch-depth, and
every every class of matroids with bounded branch-depth has bounded branch-width.
These relations are depicted in Figure~\ref{fig:classes} and they all are strict inclusions.

Our main result is
the following structural insight in classes of representable matroids with bounded branch-depth,
which is depicted by the arrow in Figure~\ref{fig:classes}.

\begin{theorem}
\label{thm:class}
Let $\MM$ be a class of $\FF$-representable matroids.
The class has bounded branch-depth if and only if
there exists an $\FF$-representable class $\MN$ of matroids with bounded contraction-deletion-depth such that
$\MM$ is a subclass of the minor closure of $\MN$.
\end{theorem}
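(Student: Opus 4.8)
The plan is to prove the two directions separately; the backward direction is routine, while the forward direction carries all the content and follows the template of the analogous graph statement that every graph of bounded rank-depth is a vertex-minor of a graph of bounded tree-depth.

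\smallskip\noindent\emph{Backward direction.} Suppose $\MM$ lies in the minor closure of an $\FF$-representable class $\MN$ with $\cdd(N)\le D$ for every $N\in\MN$. Since bounded contraction-deletion-depth implies bounded branch-depth, every $N\in\MN$ has $\bd(N)\le D$, and as branch-depth is minor-monotone the same bound holds for every minor of a member of $\MN$, hence for every member of $\MM$; so $\MM$ has bounded branch-depth.

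\smallskip\noindent\emph{Forward direction.} It suffices to prove the following quantitative statement: there is a function $g$ such that every $\FF$-representable matroid $M$ admitting a depth-decomposition of depth at most $h$ and width at most $k$ is a minor of an $\FF$-representable matroid $N$ with $\cdd(N)\le g(h,k)$. This yields the theorem with $\MN=\set{N_M:M\in\MM}$, because $\bd(M)\le d$ bounds both $h$ and $k$ by a function of $d$. I would prove it by induction on $h$. The base case $h\le 1$ is straightforward: a matroid with a depth-$1$ decomposition of width at most $k$ is, up to loops and coloops, a direct sum of pieces of bounded size, hence has bounded $\cdd$. For the inductive step, fix a depth-decomposition $(T,r)$ of $M$ of depth $h$ and width $k$ together with an $\FF$-representation of $M$, and let $E_1,\dots,E_m$ be the parts of $E(M)$ corresponding to the subtrees hanging below the root. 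I would build $N$ by enlarging $M$, following $T$ from the root downwards: at every node one adjoins a bounded number of new ``interface'' elements, always taken inside the span of the original elements (so that deleting all of them recovers $M$, exhibiting $M$ as a minor of $N$); and below each $E_i$ one grafts in a copy of the matroid supplied for $E_i$ by the induction hypothesis. The interface elements adjoined for the root span the subspace $W\subseteq\FF^{r(M)}$ through which $E_1,\dots,E_m$ mutually interact; there are only $O_k(1)$ of them because the width bound at the root controls the total overlap of the parts, and contracting them (or deleting them, as the construction dictates) turns $N$ into a direct sum of the matroids grafted onto $E_1,\dots,E_m$. Since the contraction-deletion-depth of a disconnected matroid is the maximum over its components, and each grafted matroid has $\cdd$ at most $g(h-1,k)$, we obtain $\cdd(N)\le O_k(1)+g(h-1,k)=:g(h,k)$, closing the induction; here one uses that restriction and contraction do not increase connectivity, so the subtree below each $E_i$ really does give a depth-$(h-1)$, width-$k$ decomposition of the relevant minor of $M|E_i$, to which the induction hypothesis applies.

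\smallskip\noindent I expect the main obstacle to be exactly the claim that, at a node of $T$ with many children, all the pairwise interactions of the children are captured by a single subspace of dimension bounded in terms of $k$, so that a bounded set of interface elements ``cuts'' the node in one stroke: the naive bound is only linear in the number of children, so this step must genuinely use the width condition of the branch-depth decomposition (rather than a mere bound on the connectivities of individual edges of $T$) together with $\FF$-representability, and it may need input from the structure theory of branch-depth. A secondary but still delicate point is the representation bookkeeping: one must arrange the $\FF$-representation of $N$ so that one and the same family of interface elements simultaneously witnesses $M$ as a deletion-minor of $N$ and, upon contraction, disconnects $N$ into precisely the grafted pieces without otherwise disturbing them, and so that the matroids furnished by the induction hypothesis can be grafted onto $E_1,\dots,E_m$ inside a single $\FF$-representable matroid.
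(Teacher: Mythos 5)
Your backward direction is exactly the paper's and is fine, and your overall plan for the forward direction (reduce to a quantitative statement about rooted decompositions, induct on depth, adjoin a bounded number of interface elements at the root so that one round of deletions/contractions splits the matroid into pieces handled by induction) is the same architecture as the paper's Theorem~\ref{thm:main}. However, the step you yourself flag as ``the main obstacle'' is precisely where all the content lies, and it is not supplied: the paper's key ingredient is Lemma~\ref{lm:main}, which states that if $\lambda^*(X_1,\ldots,X_k)\le r$ then there is a subspace $A$ with $\dim A\le 3r$ such that $\sum_i\dim(X_i/A)\le\dim\sum_i X_i$. Without this (or something equivalent), your induction step has no starting point, so as it stands the proposal has a genuine gap rather than a complete proof. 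Moreover, the version of the claim you hope for is too strong and is in fact false: there is in general no bounded-dimensional subspace $W$ such that quotienting by $W$ turns $X_1,\ldots,X_k$ into a direct sum. Already $k$ generic lines spanning a $(k-1)$-dimensional space (the fat cycle of the introduction) have $\lambda^*\le 1$, yet modulo any subspace $A$ the deficiency $\sum_i\dim(X_i/A)-\dim\bigl(\bigl(\sum_i X_i\bigr)/A\bigr)$ can only be guaranteed to be at most $\dim A$, not $0$. The paper's construction therefore does two things your sketch omits: it only contracts an ``almost-direct-sum'' interface $A$, and it repairs the remaining deficiency by lifting into $d_C\le\dim A$ extra coordinates with extra elements $z^i_j$ whose contraction stitches the lifted copies back to the original vectors; these are exactly the elements that make $M$ a minor of $N$ while a complementary set of operations disconnects $N$.

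A second, smaller but real error: your closing claim that ``restriction and contraction do not increase connectivity, so the subtree below each $E_i$ really does give a depth-$(h-1)$, width-$k$ decomposition'' does not hold in the form you need. The pieces to which the induction hypothesis must be applied are not minors of $M|E_i$: they are obtained by adjoining new elements spanning $A$ (a subspace generally not spanned by elements of $M$) and contracting them, and this can increase $\lambda$ on subsets of $E_i$ by up to $\dim A\le 3r$. This is why in the paper the width degrades from $r$ to $4r$ at each level and the final bound is exponential in the depth ($\cdd(N)\le 2r(4^d-1)+1$). Keeping the width fixed at $k$ through the induction, as you state it, is not justified; the induction still closes if you let the width grow by an additive $O(r)$ per level, but that bookkeeping has to be done. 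In short: the skeleton matches the paper, but the two points you defer --- the bounded interface subspace (in its correct, almost-direct-sum form) and the representation-level surgery that simultaneously exhibits $M$ as a minor and disconnects $N$ --- are exactly the paper's Lemma~\ref{lm:main} and the $b^i_j$/$z^i_j$ construction in Theorem~\ref{thm:main}, and without them the argument is a plan, not a proof.
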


\noindent In other words,
the notion of branch-depth can be thought of as the minor closure of the more restrictive parameter of contraction-deletion-depth.
In a certain sense, our result is analogous to the aforementioned result stating that
graph classes with bounded rank-depth coincide with those with bounded shrub-depth and
are vertex-minor closures of classes with bounded tree-depth.
Indeed, rank-depth captures the complexity of cuts in a manner similar to branch-depth, and
shrub-depth and tree-depth capture the complexity of decomposing a graph
while contraction-deletion-depth attempts to measure similar complexity in the matroid world.

The characterization of classes of representable matroids with bounded branch-depth given in Theorem~\ref{thm:class}
is implied by the following theorem:

\begin{theorem}
\label{thm:bound}
Every $\FF$-representable matroid $M$ is a minor of an $\FF$-representable matroid $N$ such that
\[\cdd(N)\le 2\bd(M)\cdot (4^{\bd(M)}-1)+1.\]
\end{theorem}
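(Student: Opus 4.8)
We work with a fixed $\FF$-representation of $M$: identify each $e\in E(M)$ with its column vector, so that every $X\subseteq E(M)$ spans a subspace $\langle X\rangle$ of $\FF^{\,r(M)}$, and the new elements we add to form $N$ will be given by explicit vectors over $\FF$, keeping $N$ representable. We use repeatedly that connectivity is a linear quantity, namely $\lambda_M(X)=\dim\!\bigl(\langle X\rangle\cap\langle E(M)\setminus X\rangle\bigr)$, and that the order of a multicut $\{A_1,\dots,A_t\}$ of a subset $E'\subseteq E(M)$ equals $\sum_{i=1}^{t}\dim\langle A_i\rangle-\dim\langle E'\rangle$. Fix a branch-depth decomposition $(T,\varphi)$ witnessing $\bd(M)=d$: a rooted tree $T$ of depth at most $d$ with a bijection $\varphi$ from $E(M)$ onto the leaves of $T$ such that the multicut at every internal node has order at most $d$. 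For a node $v$ write $E_v$ for the set of elements at leaves below $v$; restricting the decomposition to the subtree $T_v$ shows that $M|E_v$ again has branch-depth at most $d$, realized by a strictly shallower tree.

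\textbf{The construction.} We build $N$ by attaching to each node of $T$ a small gadget of new elements, so that $N$ admits a contraction--deletion decomposition mirroring $T$: processing the gadget of a node $v$ --- contracting some of its new elements and deleting the others --- disconnects the portion of $N$ below $v$ into the portions below the children of $v$, while $M|E_v$ is preserved as a minor throughout. Two features of the decomposition make this possible despite the absence of degree bounds. First, an order-$\le d$ multicut $\{E_{u_1},\dots,E_{u_t}\}$ at $v$ (with $u_i$ the children of $v$) is ``almost a direct sum'': putting $B_i=\langle E_{u_i}\rangle\cap\langle E_{u_1}\cup\dots\cup E_{u_{i-1}}\rangle$, one has $\sum_i\dim B_i=\mathrm{order}(v)\le d$, so after reordering all but at most $d$ children satisfy $B_i=0$ and the entanglement at $v$ is confined to a space of dimension at most $d$, even though it may involve many of the children. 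The genuinely unentangled children give rise to direct summands, which need no gadget and are treated by induction (here we use that the contraction--deletion-depth of a direct sum is the maximum over its summands), leaving a residual instance at $v$ with only $O(d)$ entangled pieces, whose separation can be implemented with $O(d)$ new elements. Second, contracting the interface of a child destroys part of $M|E_{u_i}$ whereas deleting it does not disconnect the children; the gadget is therefore assembled from paired elements, for each interface direction one element to be contracted (splitting off the child) and one parallel element to be deleted once that child has recorded it (keeping $M|E_{u_i}$ as a minor). Arranging these requirements to hold simultaneously, and verifying that $M$ --- not merely each $M|E_v$ --- is a minor of $N$ by composing the per-node contractions and deletions consistently along $T$, is the crux of the argument.

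\textbf{The bound.} By induction on the depth of $T_v$, each node $v$ yields a matroid $N_v$ containing $E_v$ with $M|E_v$ a minor of $N_v$ and $\cdd(N_v)\le f(\text{depth of }T_v)$; taking $v$ the root gives $N$ with $M$ a minor and $\cdd(N)\le f(d)$. Processing the gadget at $v$ uses $O(d)$ steps of the $\cdd$-recursion --- one per new element, accounting for both its contraction and its deletion --- after which $N_v$ breaks into the $N_{u_i}$ for the $O(d)$ entangled children together with the direct-summand part; the latter contributes only through a maximum, but handling the entangled part forces a multiplicative blow-up bounded by a constant, which can be taken to be $4$, per level of $T$, so that $f$ satisfies a recursion of the form $f(h)\le 4\,f(h-1)+O(d)$ with $f(0)=1$. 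Solving this over the at most $d$ levels of $T$ yields $\cdd(N)\le 2\bd(M)\cdot (4^{\bd(M)}-1)+1$. I expect the genuinely delicate parts to be the design of the paired-element gadget --- making it both disconnect the children and preserve $M|E_v$ --- and the global bookkeeping that schedules all per-node operations into a single contraction--deletion decomposition meeting exactly the claimed bound.
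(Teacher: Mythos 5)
There is a genuine gap at the foundation of your construction. Your ``first feature'' rests on the claim that the order of the multicut at a node $v$ equals $\sum_{i}\dim\langle A_i\rangle-\dim\langle E'\rangle$, so that $\sum_i\dim B_i\le d$ and hence all but at most $d$ children are unentangled direct summands. But under the definition of branch-depth used here (and in DeVos--Kwon--Oum), the condition at a node is only $\lambda^*_M(X_1,\ldots,X_k)\le d$, i.e.\ a bound on the \emph{maximum over bipartitions} $\lambda_M\bigl(\bigcup_{i\in I}X_i,\bigcup_{i\notin I}X_i\bigr)$, and this does not bound $\sum_i\dim X_i-\dim\sum_i X_i$ at all: if the $k$ children all span the same line (many parallel classes, or the parts at deeper levels of a fat cycle), then $\lambda^*=1$ while every $B_i$ is nonzero and $\sum_i\dim B_i=k-1$ is unbounded. (Indeed, if the sum-form quantity were the right notion of width, $U_{1,n}$ would have unbounded branch-depth, contradicting $\bd\le\cdd=2$.) So the ``residual instance with only $O(d)$ entangled pieces'' need not exist, and this shared-subspace situation is exactly the hard, prototypical case the theorem is about. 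The paper's proof handles it with its key Lemma~\ref{lm:main}: small $\lambda^*$ forces the existence of a subspace $A$ of dimension at most $3r$ such that \emph{modulo $A$} the spans form an almost direct sum; the construction then adjoins a basis of $A$ (deleted to recover $M$, contracted to split $N$) together with at most $3r$ paired elements repairing the deficiency of the direct sum, and recurses. Your proposal has no substitute for this dichotomy.

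Two further points. First, you yourself flag the gadget design and the global minor bookkeeping as ``the crux'' and leave them unresolved, so even in the almost-direct-sum case the argument is not carried out. Second, the bound accounting is off conceptually: in the actual proof there is no multiplicative blow-up of $\cdd$ per level (disjoint unions contribute via a maximum); the factor $4$ arises because contracting the shared subspace $A$ (dimension $\le 3r$) raises all connectivities by at most $3r$, so the width parameter degrades from $r$ to $4r$ down each level, giving the recursion $f(h,r)\le 6r+f(h-1,4r)$ and hence $2r(4^d-1)+1$; your recursion $f(h)\le 4f(h-1)+O(d)$ does not correspond to any mechanism you establish.
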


\noindent
Since branch-depth is minor-monotone and upper bounded by the contraction-deletion-depth,
Theorem~\ref{thm:bound} asserts that
\emph{a representable matroid has small branch-depth if and only if
it is a minor of a (representable) matroid with small contraction-deletion-depth}.
We believe that the condition on representability in Theorems~\ref{thm:class} and~\ref{thm:bound}
is an artefact of our proof methods, as further discussed in Section~\ref{sec:concl}, and
we conjecture that both these results also hold for general matroids.

\begin{figure}
\begin{center}
\epsfbox{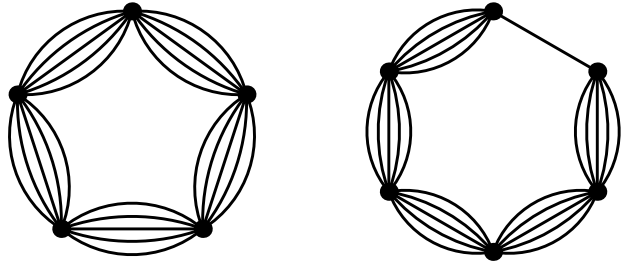}
\end{center}
\caption{A fat cycle and a graph that contains the depicted fat cycle as a minor.
         The branch-depth of the graphic matroid associated with either of the graphs is two,
	 the contraction-deletion-depth of the graphic matroid associated with the former graph is six, and
	 the contraction-deletion-depth of the graphic matroid associated with the latter graph is three.}
\label{fig:prototypical}
\end{figure}

We now try to gather some intuition
why matroids with small branch-depth are minors of matroids with small contraction-deletion-depth.
A prototypical example of a matroid with small branch-depth and large contraction-deletion-depth
is the graphic matroid of a fat cycle --- the graphic matroid associated with the graph obtained from 
a cycle of length $n$ by replacing each edge by $n$ parallel edges (see
Figure~\ref{fig:prototypical} for illustration).
The contraction-deletion depth of this graphic matroid is $n+1$ while its branch-depth is only two.
However, this graphic matroid
is a minor of the graphic matroid associated with the graph obtained
from a cycle of length $n+1$ by replacing all but one edge with $n$ parallel edges,
which has contraction-deletion-depth equal to three.
Similarly, the proof of Theorem~\ref{thm:bound} finds an embedding of a given matroid in a larger matroid that
fractures into connected components after a small number of contractions and deletions that corresponds to 
distinct subtrees in a branch-decomposition of the input matroid.
On a path towards finding such an embedding,
after reviewing the notation used in the paper in Section~\ref{sec:notation},
we prove our key structural lemma (Lemma~\ref{lm:main}) in Section~\ref{sec:lemma}.
This lemma asserts that if a collection of subspaces of a vector space has small connectivity,
then the subspaces share a low-dimensional subspace and form an almost direct sum.
In Section~\ref{sec:main},
we use the lemma to embed (as minors) matroids with small branch-depth in matroids with small contraction-deletion depth and
derive proofs of Theorems~\ref{thm:class} and~\ref{thm:bound}.
We conclude by stating open problems directly stemming from our results in Section~\ref{sec:concl}.

\section{Notation}
\label{sec:notation}

In this section, we introduce notation used throughout the paper.
The set of the first $k$ positive integers is denoted by $[k]$.
We often work with vector spaces and we briefly review notation that we use.
If $X$ is a set of vectors,
then $\overline{X}$ is the linear hull of the vectors contained in $X$.
If $X$ and $Y$ are subspaces of a vector space,
then $X+Y$ is the smallest subspace containing $X$ and $Y$, and
more generally,
if $X_1,\ldots,X_k$ are subspaces,
then $\sum\limits_{i=1}^k X_i$ is the smallest subspace containing $X_1,\ldots,X_k$.
Finally, if $X$ and $Y$ are subspaces,
the quotient space $X/Y$ is the vector space formed by classes of the equivalence relation $\sim$
such that $x\sim x'$ iff $x-x'\in Y$.
The quotient space $X/Y$ can be identified with a subspace of $X$ of dimension $\dim X-\dim X\cap Y$ (note that
we do not require $Y$ to be a subspace of $X$),
which is formed by including a single suitably chosen vector from each class of the equivalence relation $\sim$.

We assume that the reader is familiar with standard matroid terminology,
which can be found e.g. in~\cite{Oxl11}, and
we just review notation that may occasionally differ.
If $M$ is a matroid,
then $\lvert M\rvert$ denotes the number of elements of $M$.
The rank of a set $X$ of elements of $M$ is denoted by $\rank_M X$;
we drop the subscript $M$ if the matroid $M$ is clear from context.
We slightly abuse the notation and write $\rank M$ for the rank of the matroid $M$.
If $M$ is a matroid and $e$ is an element of $M$,
then $M/e$ is the matroid obtained by contracting the element $e$ and
$M\setminus e$ is the matroid obtained by deleting the element $e$.
Throughout the paper, all contractions involve non-loop elements of a matroid only.
A \emph{minor} of a matroid $M$ is any matroid that
can be obtained from $M$ contracting and deleting elements, and
the \emph{restriction} of a matroid $M$ to a set $X$ of its elements
is the matroid with the ground set $X$ and
a subset of $X$ being independent in the restriction if and only if it is independent in $M$.
If $\MM$ is a class of matroids,
then the \emph{minor closure} of $\MM$ is the class containing all matroids that are minors of an element of $\MM$.
Finally, a \emph{component} of a matroid is an inclusion-wise maximal subset of elements of $M$ such that
any two of them are contained in a common circuit.

A matroid $M$ is \emph{$\FF$-representable}
if it is a vector matroid over a field $\FF$,
i.e., its elements can be associated with vectors over $\FF$ and
a subset of elements of $M$ is independent if and only if the 
corresponding vectors are linearly independent;
such an association is referred to as an \emph{$\FF$-representation}.
If an $\FF$-representation of $M$ is fixed,
we say that the matroid $M$ is an \emph{$\FF$-represented} matroid and
identify the elements of $M$ with the vectors over $\FF$.
We will always assume that the dimension of the vector space is equal to $\rank M$,
that is, the vectors are elements of $\FF^{\rank M}$.
An $\FF$-represented matroid $M$ can be viewed as a matrix whose columns are the vectors of $M$.
Note that the matroid $M$ is preserved by row operations,
i.e., after performing any row operations
the obtained matrix is still an $\FF$-representation of the same matroid $M$.
The $\FF$-representation of the matroid $M$ obtained by deleting an element $e$
can be obtained from the $\FF$-representation of $M$ by deleting the column corresponding to $e$, and
the $\FF$-representation of the matroid $M$ obtained by contracting $e$
can be obtained from the $\FF$-representation of $M$
by first performing row operations to make the column corresponding to $e$ to be a unit vector and
then by deleting both the only row containing $1$ in the column corresponding to $e$ and the column corresponding to $e$.

We next define the connectivity function for a matroid $M$ and some functions derived from it.
If $X$ and $Y$ are disjoint subsets of the elements of a matroid $M$,
we define a two-parameter function $\lambda_M(\cdot,\cdot)$ that
\[\lambda_M(X,Y)=\rank_M X+\rank_M Y-\rank_M \paren*{X\cup Y}.\]
The \emph{connectivity function} is the one-parameter function $\lambda_M(\cdot)$ such that
$\lambda_M(X)=\lambda_M(X,Y)$ where $Y$ is the complement of $X$.
We also use $\lambda_M$ for this one-parameter function as
it is derived from the more general two-parameter function introduced earlier;
we believe that no confusion can arise as the two functions have a different number of parameters.
Finally, if $X_1,\ldots,X_k$ are disjoint subsets of the elements of a matroid $M$,
we set
\[\lambda^*_M(X_1,\ldots,X_k)=\max_{I\subseteq [k]}\lambda_M\left(\bigcup_{i\in I}X_i,\bigcup_{i\in [k]\setminus I}X_i\right).\]
In case the matroid $M$ is clear from the context we will drop the subscript,
so we will occasionally write $\lambda(\cdot)$, $\lambda(\cdot,\cdot)$ and $\lambda^*(\cdot,\ldots,\cdot)$
instead of $\lambda_M(\cdot)$, $\lambda_M(\cdot,\cdot)$ and $\lambda_M^*(\cdot,\ldots,\cdot)$.
We extend the definition of $\lambda^*(\cdot,\ldots,\cdot)$ to vector spaces and
set
\[\lambda^*(X_1,\ldots,X_k)=\max_{I\subseteq [k]} \paren*{\dim\sum_{i\in I}X_i+\dim\sum_{i\in [k]\setminus I}X_i - \dim \paren*{X_1+\ldots+X_k}}\]
for vector spaces $X_1,\ldots,X_k$.

We now define depth parameters used throughout this paper.
The \emph{contraction-deletion-depth} a matroid $M$, which is denoted by $\cdd(M)$, is defined recursively as follows:
\begin{itemize}
\item If $M$ has a single element, then $\cdd(M)=1$.
\item If $M$ is not connected, then $\cdd(M)$ is the maximum contraction-deletion-depth of a component of $M$.
\item Otherwise, $\cdd(M)=1+\min\limits_{e\in M}\min\{\cdd(M/e),\cdd(M\setminus e)\}$,
      i.e., $\cdd(M)$ is one plus the minimum contraction-deletion-depth of a matroid that
      can be obtained from $M$ by deleting or contracting a single element.
\end{itemize}      
The \emph{contraction-depth} of a matroid $M$, denoted by $\cd(M)$, is defined analogously
but with only contractions of elements permitted in the last case, and
the \emph{deletion-depth} of a matroid $M$, denoted by $\dd(M)$, is defined analogously
but with only deletions of elements permitted in the last case.

A \emph{$(d,r)$-decomposition} of a matroid $M$
is a tree $T$ such that the radius of $T$ is $d$,
the leaves of $T$ are in a one-to-one correspondence to the elements of $M$, and
the following holds for every inner vertex $v$ of $T$:
$\lambda_M^*(X_1,\ldots,X_k)\le r$
where $k$ is the number of neighbors of $v$ and
$X_1,\ldots,X_k$ are the sets of elements of $M$ assigned to the leaves of the $k$ trees of the forest $T\setminus v$.
The \emph{branch-depth} of a matroid $M$, which is denoted by $\bd(M)$,
is the minimal $k$ such that $M$ has a $(d,r)$-decomposition with $d\le k$ and $r\le k$.
It can be shown~\cite[Proposition 5.3]{DevKO20} that
if a matroid $M$ is a minor of a matroid $N$,
then $\bd(M)\le\bd(N)$.
In other words, branch-depth is a minor-monotone parameter.

For the next definition, recall that the \emph{depth} of a rooted tree $T$
is the maximum number of edges on a path from the root to a leaf of $T$.
A \emph{rooted $(d,r)$-decomposition} of a matroid $M$
is a rooted tree $T$ such that the depth of $T$ is $d$,
the leaves of $T$ are in a one-to-one correspondence with the elements of $M$, and
the following holds for every inner vertex $v$ of $T$:
$\lambda_M^*(X_1,\ldots,X_k)\le r$
where $k$ is the number of children of $v$ and
$X_1,\ldots,X_k$ are the sets of elements of $M$ assigned to the leaves of the $k$ subtrees rooted at the children of $v$.

We conclude this section with the following proposition
which relates rooted decompositions and decompositions of matroids.

\begin{proposition}
\label{prop:decomp}
If a matroid $M$ has a $(d,r)$-decomposition,
then the matroid $M$ has a rooted $(d,r)$-decomposition.
\end{proposition}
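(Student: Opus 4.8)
The plan is to root the given $(d,r)$-decomposition $T$ of $M$ at a \emph{centre} of $T$, that is, a vertex $c$ whose eccentricity in $T$ equals the radius $d$ of $T$. Provided $T$ has at least three vertices --- the cases where $T$ has one or two vertices being trivial --- such a vertex $c$ is not a leaf of $T$, so rooting $T$ at $c$ produces a rooted tree $T'$ whose leaves are exactly the leaves of $T$; hence they remain in one-to-one correspondence with the elements of $M$. Since the vertex farthest from $c$ in a tree is always a leaf, the depth of $T'$ equals the eccentricity of $c$, which is $d$. It therefore remains only to verify the connectivity condition at every inner vertex of $T'$.

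So let $v$ be an inner vertex of $T'$ with children $c_1,\ldots,c_k$, and let $X_i$ be the set of elements of $M$ at the leaves of the subtree of $T'$ rooted at $c_i$. The components of the forest $T\setminus v$ are precisely the subtrees rooted at $c_1,\ldots,c_k$, together with one more subtree, with leaf-set $X_0$ say (the one containing the parent of $v$), in the case $v\ne c$. Thus the condition imposed at $v$ by the decomposition $T$ reads $\lambda_M^*(X_1,\ldots,X_k)\le r$ if $v=c$, and $\lambda_M^*(X_0,X_1,\ldots,X_k)\le r$ if $v\ne c$; in either case it suffices to know that $\lambda_M^*(X_1,\ldots,X_k)\le r$, so the whole matter reduces to showing that adjoining a part never decreases $\lambda_M^*$, i.e.\ that
\[\lambda_M^*(X_1,\ldots,X_k)\le\lambda_M^*(X_0,X_1,\ldots,X_k)\]
for all pairwise disjoint subsets $X_0,X_1,\ldots,X_k$ of the elements of $M$.

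To establish this, fix $I\subseteq[k]$, set $A=\bigcup_{i\in I}X_i$ and $B=\bigcup_{i\in[k]\setminus I}X_i$, and bound $\lambda_M(A,B)=\rank A+\rank B-\rank(A\cup B)$. The splitting of $\{X_0,X_1,\ldots,X_k\}$ into the two groups with unions $A$ and $B\cup X_0$ is one of those considered in the maximum defining $\lambda_M^*(X_0,X_1,\ldots,X_k)$, so $\rank A+\rank(B\cup X_0)-\rank(A\cup B\cup X_0)\le\lambda_M^*(X_0,X_1,\ldots,X_k)$. Submodularity of the rank function of $M$ applied to $B\cup X_0$ and $A\cup B$, whose union is $A\cup B\cup X_0$ and whose intersection is $B$ (using that $X_0$, $A$ and $B$ are pairwise disjoint), gives $\rank(B\cup X_0)+\rank(A\cup B)\ge\rank(A\cup B\cup X_0)+\rank B$. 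Combining the two inequalities and cancelling $\rank(A\cup B\cup X_0)$ yields $\lambda_M(A,B)\le\lambda_M^*(X_0,X_1,\ldots,X_k)$, and taking the maximum over $I\subseteq[k]$ completes the argument.

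The one point that genuinely requires care is the choice of root: rooting $T$ at an arbitrary vertex could inflate the depth all the way up to the diameter of $T$, hence up to $2d$, so it is essential to root at a centre, which is exactly what makes the depth come out equal to $d$. Once that is arranged, the transfer of the connectivity bound is painless, the only mildly delicate point being that the ``reference rank'' subtracted in the definition of $\lambda_M^*$ changes when a part is added --- and this is precisely what the short submodularity computation above absorbs.
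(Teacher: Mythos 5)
Your proof is correct and follows essentially the same route as the paper: root the decomposition at a central (non-leaf) vertex so the depth equals the radius $d$, and transfer the connectivity bound at each inner vertex by absorbing the part $X_0$ coming from the rest of the tree into one side of the split. The only difference is cosmetic --- you spell out via submodularity the monotonicity inequality $\lambda_M(A,B)\le\lambda_M(A,B\cup X_0)$ that the paper states without proof.
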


\begin{proof}
Fix a $(d,r)$-decomposition $T$ of $M$, and
let $w$ be an inner vertex of $T$ such that any other vertex of $T$ is at distance at most $d$ from $w$.
Since the radius of $T$ is equal to $d$, there is some vertex of $T$ at distance exactly
$d$ from $w$, and this vertex must be a leaf.
Let $T'$ be the rooted tree obtained from $T$ by rooting it at $w$.
Note that the depth of $T'$ is $d$.
Consider now an inner vertex $v$ of $T'$ and let $k$ be the number of children of $v$.
Further, let $X_1,\ldots,X_k$ be the sets of elements of $M$
assigned to the leaves of the $k$ subtrees rooted at the children of $v$, and
let $Y$ be the set of elements of $M$ not assigned to a leaf of the subtree rooted at $v$,
namely, the elements mapped to a leaf not in any of the $k$ subtrees.
Since $T$ is a $(d,r)$-decomposition and
it holds that
\[\lambda_M\left(\bigcup_{i\in I}X_i,\bigcup_{i\in [k]\setminus I}X_i\right)\le
  \lambda_M\left(\bigcup_{i\in I}X_i,Y\cup\bigcup_{i\in [k]\setminus I}X_i\right)\]
for any subset $I\subseteq [k]$,
where the latter is at most $r$,
we obtain that $\lambda^*_M(X_1,\ldots,X_k)\le r$.
It follows that $T'$ is a rooted $(d,r)$-decomposition of the matroid $M$.
\end{proof}

\section{Main lemma}
\label{sec:lemma}

The core of our argument is Lemma~\ref{lm:main} which we prove in this section.
Informally speaking, the lemma asserts that
there are only two reasons
why a partition $E_1,\ldots,E_k$ of elements of a represented matroid $M$
satisfies that $\lambda^*_M(E_1,\ldots,E_k)\le r$:
\begin{itemize}
\item the subspaces $X_1,\ldots,X_k$ spanned by $E_1,\ldots,E_k$ intersect in a small dimensional subspace, or
\item the subspaces $X_1,\ldots,X_k$ form an ``almost direct'' sum of vector spaces,
      i.e., they are subspaces in a general position in a space whose dimension is slightly smaller than
      the sum of the dimensions of the subspaces $X_1,\ldots,X_k$.
\end{itemize}
It is not hard to see that either of these two properties implies that $\lambda^*_M(E_1,\ldots,E_k)$ is small.
Indeed, if $X_1,\ldots,X_k$ are subspaces of a vector space and
there exists a vector space $A$ of dimension $a$ such that
\begin{equation}
\sum_{i=1}^k\dim \paren*{X_i/A} \le\dim \paren*{\paren*{\sum_{i=1}^k X_i}/A}+b,\label{eq:AB}
\end{equation}
then $\lambda^*(X_1,\ldots,X_k)\le a+b$.
Note that the parameter $a$ represents the dimension of the subspace ``shared'' by $X_1,\ldots,X_k$,
which is the subspace $A$, and
the parameter $b$ represents how close the subspaces are to the direct sum (after contracting the shared subspace).

We next state the main lemma of this section. 
Note that, assuming that $A\subseteq X_1+\cdots+X_k$,
it holds that
\[\dim\paren*{\sum\limits_{i=1}^k X_i} =\dim\paren*{\paren*{\sum\limits_{i=1}^k X_i}/A}+\dim A.\]
Hence, the lemma says that \eqref{eq:AB} holds with $a=b=\dim A\le 3r$ for the subspace $A$ from the statement of the lemma.

\begin{lemma}
\label{lm:main}
Let $X_1,\ldots,X_k$ be vector spaces over a field $\FF$ such that $\lambda^*(X_1,\ldots,X_k)\le r$.
There exists a vector space $A$ with dimension at most $3r$ such that
\[\sum_{i=1}^k\dim \paren*{X_i/A} \le \dim \paren*{\sum_{i=1}^k X_i}.\]
\end{lemma}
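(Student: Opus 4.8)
The plan is as follows. First, reformulate the goal: writing $W:=\sum_{i=1}^k X_i$ and $D:=\sum_{i=1}^k\dim X_i-\dim W$, and using $\dim(X_i/A)=\dim X_i-\dim(X_i\cap A)$, the asserted inequality $\sum_i\dim(X_i/A)\le\dim W$ is equivalent to $\sum_{i=1}^k\dim(X_i\cap A)\ge D$. So it suffices to produce $A\subseteq W$ with $\dim A\le 3r$ and $\sum_i\dim(X_i\cap A)\ge D$. For $k\le 2$ this is immediate (take $A=0$, resp.\ $A=X_1\cap X_2$, noting $\dim(X_1\cap X_2)=\lambda^*(X_1,X_2)\le r$), and if $\dim W\le 3r$ one may simply take $A=W$; so assume $k\ge 3$ and $\dim W>3r$.

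Second, I would isolate three facts coming from $\lambda^*(X_1,\dots,X_k)\le r$: (a) $\dim\bigl(\sum_{i\in I}X_i\cap\sum_{i\notin I}X_i\bigr)\le r$ for every $I\subseteq[k]$, straight from the definition; (b) $\dim\bigcap_{i\in S}X_i\le r$ whenever $\lvert S\rvert\ge 2$, since any vector in the intersection lies in $X_j\cap\sum_{i\ne j}X_i$ for any $j\in S$; and (c) for any $S$ with $\lvert S\rvert\ge 2$, quotienting by $C:=\bigcap_{i\in S}X_i$ does not increase $\lambda^*$, because passing to $W/C$ changes the value of any cut $\bigl(\sum_{i\in I}X_i,\sum_{i\notin I}X_i\bigr)$ by $\dim C-\dim(C\cap\sum_{i\in I}X_i)-\dim(C\cap\sum_{i\notin I}X_i)\le 0$ (the subspace $C$ is contained in at least one of the two sides, and in both when $I$ separates two elements of $S$). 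I would also record that when $C\subseteq A\subseteq W$, the image of $X_i\cap A$ in $W/C$ equals the intersection of the images of $X_i$ and $A$, so that $\dim(X_i\cap A)=\dim(X_i\cap C)+\dim\bigl((\text{image of }X_i)\cap(\text{image of }A)\bigr)$; this is the mechanism that lets solutions be transported from $W/C$ back to $W$ while controlling $\sum_i\dim(X_i\cap A)$.

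Third, peel off the common core $C:=\bigcap_{i\in[k]}X_i$, which has $\dim C\le r$ by (b) and makes the common intersection of the images trivial in $W/C$. If one can find $A'\subseteq W/C$ with $\dim A'\le 2r$ and $\sum_i\dim\bigl((\text{image of }X_i)\cap A'\bigr)\ge D':=\sum_i\dim(X_i/C)-\dim(W/C)$, then lifting $A'$ to $A\subseteq W$ with $C\subseteq A$, the identity in (c) (using $C\subseteq X_i$) yields $\sum_i\dim(X_i\cap A)=k\dim C+\sum_i\dim\bigl((\text{image of }X_i)\cap A'\bigr)\ge k\dim C+D'=D+\dim C\ge D$, while $\dim A\le\dim C+\dim A'\le 3r$. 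Since the images still satisfy $\lambda^*\le r$ by (c), this reduces the whole statement to the case $\bigcap_iX_i=0$ with the better bound $2r$.

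Finally, for that case — $\bigcap_iX_i=0$, bound $2r$ — I would induct on $k$ combining two moves. If some pair $X_i,X_j$ meets trivially, replace $X_i,X_j$ by $X_i+X_j$: the new family has $\lambda^*\le r$, the same span $W$, and $\dim((X_i+X_j)\cap A')\ge\dim(X_i\cap A')+\dim(X_j\cap A')$ because $X_i\cap X_j=0$, so a solution for the smaller family is a solution for the original. Otherwise, since $\bigcap_iX_i=0$ while no two meet trivially, a minimal subfamily $S$ with $\bigcap_{i\in S}X_i=0$ has $\lvert S\rvert\ge 3$; picking $j\in S$, one quotients by $C':=\bigcap_{i\in S\setminus\{j\}}X_i$, which has $\dim C'\le r$ by (b), keeps $\lambda^*\le r$ by (c), admits the lifting identity by the remark in (c) again, and shrinks the remaining budget. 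The main obstacle, and where I expect the substance of the argument to lie, is the bookkeeping showing that these peeling operations never need to be iterated more than a bounded number of times before the trivial case ``$\dim W\le$ current budget'' is reached — i.e.\ that at most three layers of common intersections, contributing at most $3r$ in total, ever occur; pinning down that the constant is exactly $3$ is the crux of the proof.
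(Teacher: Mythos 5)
Your preliminary reductions are all individually correct: the reformulation of the goal as finding $A$ with $\sum_{i}\dim(X_i\cap A)\ge\sum_i\dim X_i-\dim\sum_i X_i$, facts (a)--(c) about $\lambda^*$, the lifting identity for $C\subseteq A$, the peeling of the common core $C=\bigcap_i X_i$ with its accounting, and the merging of a trivially intersecting pair. But the proof stops exactly where the lemma actually lives. Each application of your second move costs up to $r$ (the dimension of the partial intersection $C'$ you quotient by), and nothing in the proposal bounds the number of such applications by a constant: after quotienting by one minimal subfamily's intersection, other subfamilies may still intersect nontrivially, the merging move can itself create new nontrivial intersections among the merged spaces (it destroys the invariant $\bigcap_i X_i=0$ you set up), and no potential function is exhibited that forces termination within a bounded total cost. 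Your closing sentence concedes this: the claim that ``at most three layers of common intersections, contributing at most $3r$ in total, ever occur'' is precisely the content of the lemma, and it is not proved, nor is it clear that it is even true for the particular peeling strategy you describe. So what you have is a correct reduction of the lemma to an unproved statement of essentially the same difficulty.

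For comparison, the paper avoids any global ``layers of intersections'' analysis and instead processes $X_1,\dots,X_k$ one at a time, maintaining a bipartition $L_m=\sum_{i\in I}X_i$, $R_m=\sum_{i\in[m]\setminus I}X_i$ of the spaces seen so far together with an accumulating space $A_m\supseteq L_m\cap R_m$ satisfying $\dim A_m\le 3\dim(L_m\cap R_m)$ and $\sum_{i\le m}\dim(X_i/A_m)\le\dim\sum_{i\le m}X_i$. When the incoming $X_m$ forces the shared space $A'_m=(X_m+A_{m-1})\cap\sum_{i<m}X_i$ to grow by $d$ dimensions, the new vectors are written as $\ell_i+r_i$ with $\ell_i\in L_{m-1}$, $r_i\in R_{m-1}$, and $X_m$ is attached to the side chosen so that the opposite side contributes at least $d/2$ new dimensions to $L_m\cap R_m$; since $\dim(L_m\cap R_m)\le\lambda^*\le r$ throughout, this amortization (pay $d$, gain at least $d/2$ in a potential bounded by $r$) is what produces the constant $3$. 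Some charging device of this kind---an explicit potential bounded by $r$ against which the accumulated dimension of $A$ is paid---is exactly what your bookkeeping step would need and is absent from the proposal.
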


\begin{proof}
We will construct vector spaces $A_m$, $L_m$ and $R_m$ for every $m\in [k]$
with the following properties:
\begin{enumerate}[(i)]
\item\label{it:1} $A_m$ is a subspace of $X_1+\cdots+X_m$,
\item\label{it:2} there exists $I\subseteq [m]$ such that
      \[L_m=\sum_{i\in I}X_i \quad \mbox{and} \quad R_m=\sum_{i\in [m]\setminus I}X_i,\]
\item\label{it:3} if $m>1$, then $A_{m-1}\subseteq A_m$, $L_{m-1}\subseteq L_m$, and $R_{m-1}\subseteq R_m$,
\item\label{it:4} $L_m\cap R_m$ is a subspace of $A_m$,
\item\label{it:5} $\dim A_m\le 3\cdot\dim L_m\cap R_m$, and
\item\label{it:6} it holds that
  \[\sum_{i=1}^m\dim \paren*{X_i/A_m}\le \dim \paren*{\sum_{i=1}^m X_i}.\]
\end{enumerate}
Once we have constructed $A_m$, $L_m$ and $R_m$ for every $m \in [k]$,
we set the vector space $A$ from the statement of the lemma to be the vector space $A_k$.
Note that
since the dimension of $L_k\cap R_k$ cannot be larger than $\lambda^*(X_1,\ldots,X_k)\le r$ by the property (\ref{it:2}),
the dimension of $A=A_k$ is at most $3r$ by the property (\ref{it:5}), and
the inequality from the statement of the lemma follows from the property (\ref{it:6}) above for $m=k$.
Therefore, it suffices to construct the vector spaces $A_m$, $L_m$, and $R_m$, $m\in [k]$, in order to prove the lemma.

We now iteratively construct the vector spaces $A_m$, $L_m$, and $R_m$, starting with $m=1$.
We set $A_1$ and $R_1$ to be the $0$-dimensional vector space and $L_1=X_1$.
It is straightforward to verify that $A_1$, $L_1$ and $R_1$ satisfy all properties (\ref{it:1})--(\ref{it:6}).

Suppose that we have constructed vector spaces $A_{m-1}$, $L_{m-1}$, and $R_{m-1}$.
Consider the vector space $A'_m$ defined as follows:
\begin{equation}
A'_m=\left(X_m+A_{m-1}\right)\cap\sum_{i \in [m-1]}X_i.\label{eq:Am}
\end{equation}
Note that $A'_m$ contains $A_{m-1}$ as a subspace, since $A_{m-1}$ is a subspace
of $\sum_{i = 1}^{m - 1}X_i$.
We next distinguish two cases:
the first is that the spaces $A_{m-1}$ and $A'_m$ are equal, and
the second is that $A_{m-1}$ is a proper subspace of $A'_m$.

If the spaces $A_{m-1}$ and $A'_m$ are equal,
we set the subspaces $A_m$, $L_m$ and $R_m$ as follows:
\[L_m=L_{m-1}+X_m, \quad R_m=R_{m-1} \quad \mbox{and} \quad A_m=A'_m+(L_m\cap R_m)=A_{m-1}+(L_m\cap R_m).\]
The subspaces $A_m$, $L_m$ and $R_m$ clearly satisfy the properties (\ref{it:1})--(\ref{it:4}).
The property (\ref{it:5}), which bounds the dimension of $A_m$, follows from the next estimates:
\begin{align*}
  \dim A_m & = \dim \paren*{A_{m-1}+(L_m\cap R_m)} \\
           & = \dim A_{m-1} + \dim \paren*{L_m\cap R_m} - \dim \paren*{A_{m-1}\cap (L_m\cap R_m)} \\
           & \le \dim A_{m-1} + \dim \paren*{L_m\cap R_m} - \dim \paren*{A_{m-1}\cap (L_{m-1}\cap R_{m-1})} \\
           & = \dim A_{m-1} + \dim \paren*{L_m\cap R_m} - \dim \paren*{L_{m-1}\cap R_{m-1}} \\
           & \le 3\dim \paren*{L_{m-1}\cap R_{m-1}} + \dim \paren*{L_m\cap R_m} - \dim \paren*{L_{m-1}\cap R_{m-1}} \\
           & = \dim \paren*{L_m\cap R_m} + 2 \dim \paren*{L_{m-1}\cap R_{m-1}} \le 3 \dim\paren*{ L_m\cap R_m}.\\
\end{align*}
We prove the property (\ref{it:6}) jointly for the first and the second cases later.

If the space $A_{m-1}$ is a proper subspace of $A'_m$, we proceed as follows.
Let $d=\dim A'_m-\dim A_{m-1}$;
we remark that the previous case $A'_m=A_{m-1}$
can be viewed as the case $d=0$ of the arguments that we next present, however,
we decided to separate the two cases in the interest of the clarity of presentation.
The definition of $A'_m$ as \eqref{eq:Am} implies that
there exist (linearly independent) vectors $x_1,\ldots,x_d\in X_m$ such that
any basis of $A_{m-1}$ together with $x_1,\ldots,x_d$ is a basis of $A'_m$.
Note that none of the vectors $x_1,\ldots,x_d$ belongs to $A_{m-1}$.
Since the vector $x_i$ belongs to $A'_m$, and
so $x_i\in X_1+\cdots+X_{m-1}=L_{m-1}+R_{m-1}$,
there exist $\ell_i\in L_{m-1}$ and $r_i\in R_{m-1}$ such that $x_i=\ell_i+r_i$.
Let $L'$ be the linear hull of the vectors $\ell_1,\ldots,\ell_d$ and
$R'$ the linear hull of the vectors $r_1,\ldots,r_d$.
Note that the vectors $x_1,\ldots,x_d$ are contained in $L'+R'$,
which implies that
\begin{equation}
  \dim (L'+R')-\dim \paren*{(L'+R')\cap A_{m-1}}\ge d.\label{eq:lm1}
\end{equation}
On the other hand,
the space $L'\cap R'$ is a subspace of $L_{m-1}\cap R_{m-1}$ and so of $A_{m-1}$,
which yields that
\begin{equation}
  \dim (L'\cap R')-\dim \paren*{(L'\cap R')\cap A_{m-1}}=0.\label{eq:lm2}
\end{equation}
Next observe that
\begin{align}
\dim L'+\dim R' & = \dim (L'+R')+\dim (L'\cap R'), \mbox{ and} \label{eq:lm3}\\
\dim \paren*{L'\cap A_{m-1}} + \dim \paren*{R'\cap A_{m-1}} & = \dim \paren*{(L'+R')\cap A_{m-1}}+\dim \paren*{(L'\cap R')\cap A_{m-1}}. \label{eq:lm4}
\end{align}
We combine \eqref{eq:lm1}, \eqref{eq:lm2}, \eqref{eq:lm3} and \eqref{eq:lm4} to obtain that
\[\left(\dim L'-\dim \paren*{L'\cap A_{m-1}}\right)+\left(\dim R'-\dim \paren*{R'\cap A_{m-1}}\right)\ge d.\]
It follows that $\dim L'-\dim \paren*{L'\cap A_{m-1}}\ge d/2$ or $\dim R'-\dim \paren*{R'\cap A_{m-1}}\ge d/2$ (or possibly both).
The two possibilities are symmetric, so we assume that $\dim L' - \dim \paren*{L' \cap A_{m - 1}} \geq d/2$.

We define the subspaces $L_m$, $R_m$, and $A_m$ as
\[L_m=L_{m-1}, \quad R_m=R_{m-1}+X_m \quad \mbox{and} \quad A_m=A'_m+(L_m\cap R_m).\]
Note that every $\ell_i=x_i-r_i$ belongs to $L_m\cap R_m$:
indeed, $\ell_i\in L_{m-1}\subseteq L_m$ and $x_i-r_i\in X_m + R_{m - 1} = R_m$.
Hence, the space $L'$, which is the linear hull of the vectors $\ell_1,\ldots,\ell_d$,
is a subspace of $L_m\cap R_m$.
Since $L_{m-1}\cap R_{m-1}$ is a subspace of $A_{m-1}$,
it follows that
\begin{align*}
  \dim \paren*{L_m\cap R_m} - \dim\paren*{ L_{m-1}\cap R_{m-1}}
  & \ge \dim \paren*{L'+(L_{m-1}\cap R_{m-1})} - \dim \paren*{L_{m-1}\cap R_{m-1}}\\
  & \ge \dim \paren*{L'+A_{m-1}}-\dim A_{m-1}\\
  & =\dim L'-\dim \paren*{L'\cap A_{m-1}}\\
  & \ge d/2.
\end{align*}  
Hence, 
it holds that $d/2\le\dim \paren*{L_m\cap R_m} - \dim\paren*{ L_{m-1}\cap R_{m-1}}$,
which is equivalent to 
\begin{equation}
d\le 2\left(\dim \paren*{L_m\cap R_m} - \dim\paren*{ L_{m-1}\cap R_{m-1}}\right).\label{eq:upperd}
\end{equation}.
We can now estimate the dimension of $A_m$ as follows (recall that $L_{m-1}\cap R_{m-1}$
is a subspace of $A_{m-1}\subseteq A'_m$ and also of $L_m\cap R_m$):
\begin{align*}
  \dim A_m & \le \dim A'_m+\dim \paren*{L_m\cap R_m} - \dim \paren*{L_{m-1}\cap R_{m-1}}\\
           & = \dim A_{m-1}+d+\dim \paren*{L_m\cap R_m} - \dim\paren*{L_{m-1}\cap R_{m-1}}\\
           & \le \dim A_{m-1}+3\left(\dim \paren*{L_m\cap R_m} - \dim\paren*{L_{m-1}\cap R_{m-1}}\right)\\
           & \le 3\dim\paren*{L_{m-1}\cap R_{m-1}}+3\left(\dim\paren*{L_m\cap R_m} - \dim\paren*{L_{m-1}\cap R_{m-1}}\right)\\
           & = 3\dim \paren*{L_m\cap R_m},
\end{align*}	 
where the second inequality follows from \eqref{eq:upperd}.
We have now verified that the properties (\ref{it:1})--(\ref{it:5}) hold in the second case.

We now continue jointly the analysis of the first and second cases and establish the property (\ref{it:6}).
We will use the definition of $A'_m$ as \eqref{eq:Am} and that $A_m=A'_m+(L_m\cap R_m)$.
The definition of $A'_m$ yields that
\begin{equation}
  \dim\paren*{X_m+A_{m-1}} + \dim\paren*{\sum_{i=1}^{m-1}X_i} = \dim\paren*{\sum_{i=1}^m X_i} + \dim A'_m;\label{eq:lmm}
\end{equation}
indeed,
\eqref{eq:lmm} follows from the equality $\dim S+\dim T=\dim S\cup T+\dim S\cap T$
used for $S=X_m+A_{m-1}$ and $T=\sum\limits_{i=1}^{m-1}X_i$.
We now establish the property (\ref{it:6}).
\begin{align*}
  \sum_{i=1}^m\dim\paren*{X_i/A_m} & \le \sum_{i=1}^m\dim\paren*{X_i/A'_m}\\
        & = \sum_{i=1}^{m-1}\dim\paren*{X_i/A'_m}+\dim \paren*{X_m/A_{m-1}}-\left(\dim A'_m-\dim A_{m-1}\right)\\
			 & = \sum_{i=1}^{m-1}\dim (X_i/A'_m) + \dim (X_m+A_{m-1})-\dim A_{m-1}-\dim A'_m + \dim A_{m-1}\\
			 & = \sum_{i=1}^{m-1}\dim (X_i/A'_m) + \dim (X_m+A_{m-1})-\dim A'_m\\
			 & \le \sum_{i=1}^{m-1}\dim(X_i/A_{m-1}) + \dim (X_m+A_{m-1})-\dim A'_m\\
			 & \le \sum_{i=1}^{m-1}\dim X_i+\dim (X_m+A_{m-1})-\dim A'_m\\
       &= \sum_{i = 1}^{m} \dim X_i,
\end{align*}			 
where the last equality is a restatement of \eqref{eq:lmm}.
\end{proof}

\section{Main result}
\label{sec:main}

In this section, we prove the main result of our paper
which is the next theorem.
The results announced in Section~\ref{sec:intro} are its immediate corollaries
which we derive just after the proof of the theorem.

\begin{theorem}
\label{thm:main}
If an $\FF$-representable matroid $M$ has a rooted $(d,r)$-decomposition,
then there exists an $\FF$-representable matroid $N$ such that
$M$ is a minor of $N$ and $\cdd(N)\le 2r\cdot (4^d-1)+1$.
\end{theorem}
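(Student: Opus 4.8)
The plan is to proceed by induction on the depth $d$ of the rooted decomposition. The base case $d=0$ is trivial: a matroid with a depth-$0$ rooted decomposition has a single element, so $M=N$ works with $\cdd(N)=1\le 2r\cdot(4^0-1)+1=1$. For the inductive step, fix a rooted $(d,r)$-decomposition $T$ of $M$ with $d\ge 1$, let $v_1,\dots,v_k$ be the children of the root, and let $E_1,\dots,E_k$ be the element sets of $M$ assigned to the corresponding subtrees; each subtree rooted at $v_i$ is a rooted $(d-1,r)$-decomposition of the restriction $M_i=M|E_i$. By the defining property of the decomposition, $\lambda^*_M(E_1,\dots,E_k)\le r$. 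Fixing an $\FF$-representation of $M$ in $\FF^{\rank M}$ and letting $X_i=\overline{E_i}$ be the spans of the parts, we have $\lambda^*(X_1,\dots,X_k)\le r$, so Lemma~\ref{lm:main} supplies a subspace $A$ with $\dim A\le 3r$ and $\sum_{i=1}^k\dim(X_i/A)\le\dim\sum_{i=1}^k X_i\le\rank M$.

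The heart of the argument is to use $A$ to build $N$. The idea is that $A$ is the small ``shared core'' through which the parts interact, and after quotienting it out the parts become an almost-direct sum, so the corresponding pieces of $N$ will fall into (nearly) separate components after contracting a bounded number of elements. Concretely, I would recursively apply the inductive hypothesis to each $M_i$ (which has a rooted $(d-1,r)$-decomposition) to obtain $\FF$-representable $N_i\supseteq M_i$ with $\cdd(N_i)\le 2r\cdot(4^{d-1}-1)+1$. I then need to glue the $N_i$ together along a copy of $A$ (together with whatever extra coordinates are forced by the almost-direct-sum slack, at most $\dim A\le 3r$ more), adding a bounded number $\le 2\dim A\le 6r$ of new ``connector'' elements that span $A$ plus the slack, so that: (a) $M$ is a minor of the glued matroid $N$ — obtained by contracting the connector elements and the extra elements of each $N_i$, then the shared-core artefacts collapse and the parts reassemble into $M$ via the inequality $\sum\dim(X_i/A)\le\dim\sum X_i$ which guarantees the reassembly does not lose rank; and (b) $\cdd(N)$ is controlled: by deleting/contracting the $\le 6r$ connector elements one at a time we reach a matroid whose components are (subsets of) the $N_i$, giving $\cdd(N)\le 6r+\max_i\cdd(N_i)\le 6r+2r(4^{d-1}-1)+1$. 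Since $6r+2r\cdot 4^{d-1}-2r+1 = 2r(4^{d-1}+2)-2r+1$, I would need the bookkeeping to land at $2r(4^d-1)+1$; note $2r(4^d-1)+1 = 2r\cdot 4^d - 2r+1 = 8r\cdot 4^{d-1}-2r+1$, which comfortably dominates $6r+2r\cdot4^{d-1}-2r+1$, so there is slack — in fact one can afford each recursive ``glue'' to cost up to $6r\cdot 4^{d-1}$, which is exactly what an unoptimized recursion will produce, and the factor $4$ in $4^d$ is presumably tuned to absorb the constant $3$ from Lemma~\ref{lm:main} doubled.

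The main obstacle I expect is step (a): making precise the construction of $N$ so that $M$ is genuinely recovered as a minor. The delicate point is that the subspaces $X_i$ share the subspace $A$ only ``up to the total-dimension bound'' — Lemma~\ref{lm:main} does \emph{not} say $X_i\cap X_j\subseteq A$ for individual pairs, only the aggregate inequality — so I cannot simply identify $X_i\cap A$ across $i$ and take a pushout of vector spaces. Instead I would work with an auxiliary space $W=\bigl(\sum_i X_i\bigr)\oplus \bigl(\bigoplus_i (X_i/A)\bigr)/\!\sim$, or more carefully: embed each $N_i$ into $\FF^{\rank N_i}$, form the direct sum of all these coordinate spaces, and then add connector elements (one near-unit vector per basis element of $A$ and per unit of slack, placed to link the copies) whose contraction performs the identifications dictated by the original representation of $M$. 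Verifying that contracting exactly the connectors plus the auxiliary elements of each $N_i$ yields $M$ — i.e. that the rank function matches — is where the inequality from Lemma~\ref{lm:main} must be invoked quantitatively, and where the argument is most likely to need care with loops, with the case $\dim A=0$ (the $X_i$ already an almost-direct sum), and with ensuring $N$ is connected enough that the $\cdd$ recursion behaves (if $N$ happens to be disconnected the recursion only gets easier, so this is not a real obstruction but must be stated).
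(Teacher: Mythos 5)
Your high-level shape (induction on $d$, Lemma~\ref{lm:main} at the root, recurse on the children, glue with about $6r$ extra elements) matches the paper, but the step you yourself flag as the obstacle --- recovering $M$ (not just $\bigoplus_i M|E_i$) as a minor of the glued matroid with only $O(r)$ connector elements --- is exactly where your plan breaks, and the way it is resolved in the paper changes the bookkeeping you rely on. You recurse on the restrictions $M_i=M|E_i$ with parameter $r$ and then propose to add connectors ``whose contraction performs the identifications dictated by the original representation of $M$''. If the blocks $N_i$ are placed on disjoint coordinate sets, then each block carries its own copy of the relevant part of $A$ (and of the almost-direct-sum slack), and identifying these with a central copy requires connector vectors \emph{per block} --- up to order $r$ of them for each $i\in[k]$, hence $\Omega(rk)$ in total, and the glued matroid does not split into components until essentially all of them are contracted, so the $\cdd$ bound degenerates with $k$. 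The paper avoids this by not recursing on $M_i$ at all: it first lifts $M$ to $M'$ in $\FF^{\rank M+d_C}$ (new coordinates $b^i_j$ and new elements $z^i_j=b^i_j-x^i_j$ disentangle the excess dimensions), then contracts a basis $a_1,\ldots,a_{d_A}$ of $A$ and deletes the $z$'s, and recurses on the restrictions $M''_i$ of the resulting matroid to $E_i$; in the final $N$ the $A$-coordinates are \emph{shared} rows across all blocks, with each element of $E_i$ keeping its original $A$-coefficients there, so that contracting just the $d_A$ elements $a_j$ and deleting the $d_B$ elements $z^i_j$ --- at most $6r$ elements, independent of $k$ --- already makes $N$ fall apart into the $N_i$'s. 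This also requires the strengthened induction hypothesis (the representation of $M''_i$ is obtained from that of $N_i$ by deleting rows of contracted and columns of deleted elements), which your proposal does not carry along but which is needed for the block-diagonal gluing with restored $A$-rows to be well defined.

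The quantitative consequence is that your ``comfortable slack'' is illusory. Since $M''_i$ is obtained from $M$ by (adding and) contracting $A$, ranks of subsets of $E_i$ can drop by up to $\dim A\le 3r$, so $T_i$ is only a rooted $(d-1,4r)$-decomposition of $M''_i$, not a $(d-1,r)$-decomposition; the recursion is therefore $f(d,r)=6r+f(d-1,4r)$ with $f(0,r)=1$, whose solution is exactly $6r\cdot\frac{4^d-1}{3}+1=2r(4^d-1)+1$. The factor $4^d$ is not head-room for sloppy gluing, it is the exact cost of the parameter inflating from $r$ to $4r$ at each level, and your estimate $\cdd(N)\le 6r+2r(4^{d-1}-1)+1$ rests on applying the inductive hypothesis to $M_i$ with parameter $r$, which is precisely the step that does not support a $k$-independent gluing. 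So the proposal as written has a genuine gap: either the number of connectors is unbounded in $k$, or, once repaired along the paper's lines, the arithmetic must be redone with parameter $4r$ and lands exactly on the stated bound.
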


\begin{proof}
The proof proceeds by induction on $d$.
We will show the statement for $\FF$-represented matroids and
we additionally show that
the considered $\FF$-representation of $M$
can be obtained from an $\FF$-representation of $N$ (by deleting rows corresponding to contracted elements and
deleting columns corresponding to deleted elements of $N$).
The base of the induction is the case when $d=0$.
If $d=0$,
then the matroid $M$ has a single element and we set $N=M$ (note that $\cdd(N)=1$ as required).
We next present the induction step.
Fix an $\FF$-represented matroid $M$ and a rooted $(d,r)$-decomposition $T$ of $M$.
Let $k$ be the number of children of the root of $T$ and
let $T_1,\ldots,T_k$ be the (rooted) subtrees of $T$ rooted at the children of $T$.
Further, let $E_i$, be the elements of $M$ assigned to the leaves of $T_i$, $i\in [k]$, and
let $X_i$ be the linear hull of $E_i$.

Since $T$ is a $(d,r)$-decomposition of $M$,
it holds that $\lambda^*(X_1,\ldots,X_k)\le r$.
Hence, Lemma~\ref{lm:main} yields that
there exists a subspace $A$ of $\FF^{\,\rank M}$ with dimension at most $3r$ such that
\begin{equation}
\sum_{i=1}^k\dim (X_i/A) \le \dim \sum_{i=1}^k X_i.\label{eq:sumdim}
\end{equation}
Let $d_A$ be the dimension of the vector space $A$ and let $d_i$ be the dimension of the vector space $X_i/A$, $i\in [k]$.
Note that $\rank M$ is equal to $\dim (X_1+\cdots+X_k)$, which is at most $d_1+\cdots+d_k+d_A$.
On the other hand, the rank of $M$ is at least $d_1+\cdots+d_k$ by \eqref{eq:sumdim}.

Fix a basis $B_A = \set{a_1,\ldots,a_{d_A}}$ of the vector space $A$, and
for each $i\in [k]$ let $B_{i} = \set{x^i_1,\ldots,x^i_{d_i}}$ be vectors of $X_i$, such that
$a_1,\ldots,a_{d_A},x^i_1,\ldots,x^i_{d_i}$ is a basis of the vector space $X_i+A$.
Observe that the vector space $\FF^{\,\rank M}=X_1+\cdots+X_k$
is spanned by the set of vectors $B_A \cup B_1 \cup \dots \cup B_k$.
It follows that there is a subset of these vectors
is a basis of the vector space $X_1+\ldots+X_k$;
we may assume that this subset contains all vectors $a_1,\ldots,a_{d_A}$ (as these vectors are linearly independent).
Hence, after possibly permuting the vectors $x^i_1,\ldots,x^i_{d_i}$,
we assume that for each $i \in [k]$ there exists $d'_i \in \{0\}\cup [d_i]$, such that
the set of vectors $B = B_A \cup \set*{x^i_j \ \colon \ i \in [k], j \in [d'_i]}$
form a basis of the vector space $X_1+\ldots+X_k$.
In addition, we assume that
the vectors $a_1,\ldots,a_{d_A}$ are the first $d_A$ unit vectors,
$x^1_1,\ldots,x^1_{d'_1}$ are the next $d'_1$ unit vectors, etc.
Since the sum $d'_1+\cdots+d'_k+d_A$ is equal to the rank of $M$,
which is at least $d_1+\cdots+d_k$, we obtain that
\begin{equation}
\sum_{i=1}^k (d_i-d'_i) \;\le\; d_A.\label{eq:sumdd}
\end{equation}
Let $d_C$ be the value of the sum from the left side of \eqref{eq:sumdd};
note that $d_C\le d_A\le 3r$.
We now view $X_1+\cdots+X_k$ as a subspace of $\FF^{\,\rank M+d_C}$,
where the vectors in $B$
are the first $\rank M$ unit vectors (in the order described above).
Next, let $C = \set*{b^i_j \ \colon \ i \in [k], j \in [d_i] \setminus [d'_i]}$ be the remaining (last) $d_C$ 
unit vectors of $\FF^{\,\rank M + d_C}$ (where $b^1_j$ comes before $b^2_j$, which comes before $b^3_j$, etc).

\begin{figure}
\begin{center}
\epsfbox{cddrepr-1.mps}
\end{center}
\caption{The $\FF$-representation of the matroid $M'$ constructed in the proof of Theorem~\ref{thm:main}
         when $k=3$, $d_A=4$, $d'_1=3$, $d_1=4$, $d'_2=2$, $d_2=4$, $d'_3=2$ and $d_3=3$.
	 Stars depict entries that can be arbitrary (both zero or non-zero), and
	 the entries that are not displayed are zero.
	 The representation of the matroid $M''$ from the proof
	 is the part of the representation of $M'$ encompassed by the dashed lines.}
\label{fig:M}
\end{figure}

We next construct a vector matroid $M'$ of rank equal to $\rank M+d_C$ such that $M$ is a minor of $M'$.
The matroid $M'$ contains the following $\lvert M\rvert+d_A+d_C$ vectors:
\begin{itemize}
\item For every $i\in [k]$ and every element $x\in E_i$, the matroid $M'$ contains the vector
      \[\sum_{j=1}^{d_A}\alpha_j a_j+\sum_{j=1}^{d'_i}\beta_j x^i_j+\sum_{j=d'_i+1}^{d_i} \beta_j b^i_j\]
      where $\alpha_j\in\FF$, $j\in [d_A]$, and $\beta_j\in\FF$, $j\in [d_i]$
      are the unique coefficients such that
      \[x=\sum_{j=1}^{d_A}\alpha_j a_j+\sum_{j=1}^{d_i}\beta_j x^i_j.\]
      In particular,
      the matroid $M'$ contains the unit vectors $x^i_j$, $i\in [k]$ and $j\in [d'_i]$ and
      the unit vectors $b^i_j$, $i\in [k]$ and $j\in [d_i]\setminus [d'_i]$;
      the unit vectors $b^i_j$, $i\in [k]$ and $j\in [d_i]\setminus [d'_i]$
      are included because of the elements $x^i_j$ with $j\in [d_i]\setminus [d'_i]$.
\item The $d_A$ vectors $a_1,\ldots,a_{d_A}$.
\item For every $i\in [k]$ and $j\in [d_i]\setminus [d'_i]$,
      the matroid $M'$ contains the vector $z^i_j=b^i_j-x^i_j$;
      note that there are exactly $d_B$ vectors added to $M'$ because of this rule.
\end{itemize}
The construction of the matroid $M'$ is illustrated in Figure~\ref{fig:M}.

Observe that the matroid $M$ is a minor of $M'$,
specifically,
the matroid $M$ can be obtained from $M'$ by deleting the $d_A$ elements $a_1,\ldots,a_{d_A}$ and
contracting the $d_B$ elements $z^i_j$, $i\in [k]$ and $j\in [d_i]\setminus [d'_i]$.
Let $M''$ be the matroid obtained from $M'$
by contracting the $d_A$ elements $a_1,\ldots,a_{d_A}$ and
deleting the $d_B$ elements $z^i_j$, $i\in [k]$ and $j\in [d_i]\setminus [d'_i]$.
Let $M''_i$ be the restriction of $M''$ to $E_i$, and
observe $M''_i$ is a union of components of $M''$.

We aim to apply induction to each of the matroids $M''_i$, $i\in [k]$.
To do so, we first show that
$T_i$ is a rooted $(d-1,4r)$-decomposition of the matroid $M''_i$.
Observe that the matroid $M''_i$ can be obtained from the matroid $M_i$
by first adding the $d_A$ elements $a_1,\ldots,a_{d_A}$ and
then contracting the $d_A$ elements $a_1,\ldots,a_{d_A}$.
It follows that
\[\rank_M X-d_A\le \rank_M X-\dim A\cap\overline{X}=\rank_{M''_i} X\le \rank_M X\]
for every $X\subseteq E_i$.
Hence, the following holds for any two disjoint subsets $X$ and $Y$ of $E_i$:
\begin{align*}
\lambda_{M''_i}(X,Y) & = \rank_{M''_i} X+\rank_{M''_i} Y-\rank_{M''_i} (X\cup Y)\\
                     & \le \rank_{M''_i} X+\rank_{M''_i} Y-\left(\rank_{M} (X\cup Y)-d_A\right)\\
		     & \le \rank_M X+\rank_M Y-\rank_{M} (X\cup Y) + d_A\\
		     & = \lambda_M(X,Y)+\dim A \le \lambda_M(X,Y)+3r.
\end{align*}
We conclude that $T_i$ is a rooted $(d-1,4r)$-decomposition of the matroid $M''_i$.

By induction, the $\FF$-represented matroid $M''_i$ is a minor of an $\FF$-represented matroid $N_i$ such that
\[\cdd(N_i)\le 8r\cdot (4^{d-1}-1)+1=2r\cdot(4^d-1)+1-6r.\]
Set $n_i=\rank N_i-\rank M''_i$, $i\in [k]$, and
let $y^i_j$, $i\in [k]$ and $j\in [n_i]$, be the elements of $N_i$ that were contracted.
Note that for every $i\in [k]$
the elements $x^i_j$, $j\in [d_i]$, and $y^i_j$, $j\in [n_i]$, form a basis of $N_i$.

\begin{figure}
\begin{center}
\epsfbox{cddrepr-2.mps}
\end{center}
\caption{The $\FF$-representation of the matroid $N$ constructed in the proof of Theorem~\ref{thm:main}
         when $k=3$, $d_A=4$, $d'_1=3$, $d_1=4$, $n_1=1$, $d'_2=2$, $d_2=4$, $n_2=1$, $d'_3=2$, $d_3=3$ and $n_3=2$.
	 Stars depict entries that can be arbitrary (both zero or non-zero), and
	 the entries that are not displayed are zero.
	 The representations of matroids $N_1$, $N_2$ and $N_3$ obtained from induction
	 are encompassed by the dashed lines.}
\label{fig:N}
\end{figure}

We will now construct the sought matroid $N$;
the construction of the matroid $N$ is illustrated in Figure~\ref{fig:N}.
The matroid $N$ is a vector matroid of rank $R=d_1+\cdots+d_k+n_1+\cdots+n_k+d_A$
with $\lvert N_1\rvert+\cdots+\lvert N_K\rvert+d_A+d_B$ elements,
which one-to-one correspond to the elements of the matroids $N_1,\ldots,N_k$, and
the elements $a_1,\ldots,a_{d_A}$ and $z^i_j$, $i\in [k]$ and $j\in [d_i]\setminus [d'_i]$, and
so we identify them with these elements.
The elements $a_1,\ldots,a_{d_A}$,
the elements $x^i_j$, $i\in [k]$ and $j\in [d_i]$, and
the elements $y^i_j$, $i\in [k]$ and $j\in [n_i]$ will form a basis of $N$, and
so we can assume that these elements
are represented by unit vectors in the vector space $\FF^{d_1+\cdots+d_k+n_1+\cdots+n_k+d_A}$ and
the elements $a_1,\ldots,a_{d_A}$ are the first $d_A$ unit vectors.
The vector matroid $N$ contains the following vectors:
\begin{itemize}
\item each of the unit vectors $a_1,\ldots,a_{d_A}$,
\item the vector $z^i_j$, $i\in [k]$ and $j\in [d_i]\setminus [d'_i]$
      with all its additional $n_1+\cdots+n_k$ entries set to zero,
\item for every $i\in [k]$ and every vector of $N_i$ contained in $M''_i$,
      this vector with its first $d_A$ entries being the same as in $M'$ and
      all its additional $R-d_A-d_i-n_i$ entries set to zero, and
\item for every $i\in [k]$ and every vector of $N_i$ not contained in $M''_i$,
      this vector with all its the additional $R-d_i-n_i$ entries set to zero.
\end{itemize}
Note that the matroid obtained from the matroid $N$
by contracting the $d_A$ elements $a_1,\ldots,a_{d_A}$ and
deleting the $d_B$ elements $z^i_j$, $i\in [k]$ and $j\in [d_i]\setminus [d'_i]$
is the matroid that is the union of the matroids $N_1,\ldots,N_k$.
Since $d_A+d_B\le 6r$ and $\cdd(N_i)\le 2r\cdot(4^d-1)+1-6r$ for every $i\in [k]$,
it follows that the contraction-deletion-depth of $N$ is at most $2r\cdot(4^d-1)+1$.
On the other hand,
the matroid obtained from the matroid $M$
by contracting the $d_B$ elements $z^i_j$, $i\in [k]$ and $j\in [d_i]\setminus [d'_i]$,
contracting the $n_1+\cdots+n_k$ elements $y^i_j$, $i\in [k]$ and $j\in [n_i]$ and
deleting all the remaining elements not contained in $E_1\cup\cdots\cup E_k$
yields the matroid $M$.
This completes the proof of the induction step and so of the theorem.
\end{proof}

We immediately obtain Theorem~\ref{thm:bound} as a corollary.

\begin{proof}[Proof of Theorem~\ref{thm:bound}]
Let $M$ be an $\FF$-representable matroid with branch-depth $k$.
Since $M$ has a $(d,r)$-decomposition for some non-negative integers $d\le k$ and $r\le k$,
Proposition~\ref{prop:decomp} yields that $M$ has a rooted $(d,r)$-decomposition.
The corollary now directly follows from Theorem~\ref{thm:main}.
\end{proof}

We also obtain the characterization of classes of representable matroids with bounded branch-depth given in Theorem~\ref{thm:class}.

\begin{proof}[Proof of Theorem~\ref{thm:class}]
Fix a class $\MM$ of $\FF$-representable matroids.
Suppose that there exists a class $\MN$ of matroids with contraction-deletion-depth bounded by $d$ such that
$\MM$ is a subclass of the minor closure of the class $\MN$.
Since the contraction-deletion-depth is an upper bound on the branch-depth~\cite[Theorem 5.2]{DevKO20},
the branch-depth of every matroid contained in $\MN$ is also bounded by $d$ and
so is the branch-depth of every matroid contained in the minor closure of $\MN$ as
the branch-depth is minor-monotone~\cite[Proposition 5.3]{DevKO20}.
Hence, if $\MM$ is a subclass of the minor closure of $\MN$,
the branch-depth of every matroid contained in $\MM$ is bounded by $d$.

We now prove the other implication.
Suppose that the branch-depth of every matroid contained in $\MM$ is bounded by $d'$.
For every matroid $M\in\MM$,
let $N_M$ be an $\FF$-representable matroid with contraction-deletion-depth bounded at most $2d'\cdot (4^{d'}-1)+1$ such that
$M$ is a minor of $N_M$;
such a matroid $N_M$ exists by Theorem~\ref{thm:bound}.
The class $\MN$ is formed by the matroids $N_M$ for all $M\in\MM$.
Since the contraction-deletion-depth of every matroid contained in $\MN$ is at most $2d'\cdot (4^{d'}-1)+1$ and
the class $M$ is a subclass of the minor closure of $\MN$,
the other implication is now proven.
\end{proof}

\section{Conclusion}
\label{sec:concl}

Since our main result, which asserts that every representable matroid with bounded branch-depth
is a minor of a (representable) matroid with bounded contraction-deletion-depth,
applies to representable matroids
it is natural to ask whether it can be extended to all matroids.
We believe that indeed the same statement, perhaps with a worse functional dependance on branch-depth,
also holds for general matroids.

\begin{conjecture}
\label{conj1}
There exists a function $f$ such that
if a matroid $M$ has branch-depth $d$,
then there exists a matroid $N$ with contraction-deletion-depth at most $f(d)$ such that
$M$ is a minor of $N$.
\end{conjecture}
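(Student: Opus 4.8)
\emph{The plan} is to follow the architecture of the representable proof, replacing its two linear-algebraic ingredients --- Lemma~\ref{lm:main} and the explicit matrix amalgamation inside the proof of Theorem~\ref{thm:main} --- by matroid-theoretic substitutes. The first ingredient should read: whenever $E_1,\dots,E_k$ partitions the ground set of a matroid $M$ with $\lambda^*_M(E_1,\dots,E_k)\le r$, there is a matroid $A$ whose rank is bounded by a function of $r$, together with a single coherent way of attaching $A$ to each restriction $M|E_i$, such that after contracting $A$ in the resulting extensions the pieces become skew (their ranks sum to the rank of the union). The second ingredient uses $A$ to build a matroid $M'$ that has $M$ as a minor (obtained by deleting a bounded set of ``linking'' elements and contracting the copy of $A$, up to coloops) and such that deleting those linking elements while contracting $A$ instead splits $M'$ into the direct sum of matroids $M''_i$, each carrying a rooted $(d-1,O(r))$-decomposition. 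One then recurses, obtains matroids $N_i$ with $\cdd(N_i)$ bounded by the induction hypothesis, and glues them back together along $A$ and the linking elements exactly as in the proof of Theorem~\ref{thm:main}; since only a bounded-in-$r$ number of elements are contracted or deleted to separate the $N_i$, the resulting $N$ satisfies $\cdd(N)\le f(d)$ for a suitable $f$.

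\emph{For the structural lemma} I would imitate the induction on $m$ from the proof of Lemma~\ref{lm:main}, maintaining flats $A_m$, $L_m$, $R_m$ of $M$ with $L_m\cap R_m\subseteq \mathrm{cl}(A_m)$, $\rank A_m = O(\rank(L_m\cap R_m))$, and $\sum_{i\le m}\bigl(\rank E_i-\rank(E_i\cap \mathrm{cl}(A_m))\bigr)\le \rank\bigl(\bigcup_{i\le m}E_i\bigr)$. The delicate point is that the step defining $A'_m$ and bounding its dimension in the paper relies on the modular law $\dim(S+T)+\dim(S\cap T)=\dim S+\dim T$, which fails for matroid ranks; one only has submodularity, so the naive replacement of $A'_m$ by a meet of closures over-counts and the surplus has to be charged into $A_m$. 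I expect this to cost only a bounded-in-$r$ blow-up of the rank of $A$, which is harmless here because the conjecture permits an arbitrary $f$, but making this precise while keeping $A_m$ a flat (so that contracting it is well behaved) is the technical core of this part.

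\emph{The principal obstacle, and the reason the general case remains open,} is the coordinate-free realisation of the amalgamation. Abstractly one needs a matroid on $E_1\cup\dots\cup E_k$ together with a disjoint copy of $A$ that restricts to $M|E_i$ after contracting $A$ for every $i$, with the $A$-part shared across the pieces; this is precisely a generalized parallel connection along $A$, which is guaranteed to exist only when $A$ embeds as a \emph{modular} flat in each piece. The flats produced by the structural lemma need not be modular, so the real work is to force modularity --- for instance by first replacing $M$ with a small free or principal (Dilworth-truncation / Higgs-lift type) extension in which the relevant flat becomes modular, again at a bounded-in-$r$ cost --- or else to prove directly that the particular amalgam required here exists for every matroid because the pieces meet only in $A$ and are otherwise free over it, so that the classical obstructions to matroid amalgams do not materialise. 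If those obstructions genuinely do arise, a new idea beyond the linear-algebraic toolkit of the present paper would be required; this is where I expect any proof of the conjecture to succeed or fail.
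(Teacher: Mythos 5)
The statement you are addressing is Conjecture~\ref{conj1}, which the paper explicitly leaves open: the authors prove the representable case only (Theorems~\ref{thm:bound} and~\ref{thm:main}) and state in Section~\ref{sec:concl} that they believe, but cannot show, that representability is an artefact of their method. Your text is a research plan rather than a proof, and you say as much yourself; as it stands it does not establish the conjecture. The two places where it falls short are exactly the two places where the paper's argument genuinely uses linear algebra. First, your matroid analogue of Lemma~\ref{lm:main} is only asserted. The paper's proof of that lemma repeatedly uses the modular identity $\dim S+\dim T=\dim(S+T)+\dim(S\cap T)$ (for instance in \eqref{eq:lm3}, \eqref{eq:lm4} and \eqref{eq:lmm}) and, crucially, the ability to write each new vector $x_i$ as $\ell_i+r_i$ with $\ell_i\in L_{m-1}$, $r_i\in R_{m-1}$ --- a decomposition that has no counterpart for flats of a general matroid, where one only has submodularity and no way to ``split'' an element of $\mathrm{cl}(L\cup R)$ into an $L$-part and an $R$-part. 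Saying that the surplus ``has to be charged into $A_m$'' at bounded cost is precisely the statement that needs a proof, and no mechanism for the charging is given.

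Second, and more fundamentally, the amalgamation step is left unresolved, as you concede. In the representable proof the pieces are glued by placing them in complementary coordinate blocks over the shared subspace $A$ (Figure~\ref{fig:N}); after contracting $a_1,\ldots,a_{d_A}$ and deleting the $z^i_j$ the matroid $N$ literally decomposes as a direct sum of the $N_i$, which is what drives the bound on $\cdd(N)$. Abstractly this is a generalized parallel connection of the pieces along $A$, which is only guaranteed to exist when $A$ sits as a modular flat in the pieces, and the flats produced by any rank-based analogue of Lemma~\ref{lm:main} have no reason to be modular; matroid amalgams along non-modular flats can simply fail to exist. Your proposed remedy --- first pass to a ``small free or principal extension in which the relevant flat becomes modular'' --- is itself an unproven claim: it is not known (and is likely false in this generality) that an arbitrary flat of bounded rank can be made modular by an extension of bounded size while preserving the restrictions $M|E_i$, the minor relation to $M$, and the decompositions needed for the recursion. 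So the proposal correctly identifies where the difficulty lies, and its outline matches how the paper's representable argument is structured, but it supplies no new idea that overcomes either obstacle; the conjecture remains open under your approach.
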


It is possible to state \Cref{conj1} purely in terms of relating classes of matroids of
bounded branch-depth and the minor closures of matroid classes with bounded contraction-deletion-depth.
It is easy to see that \Cref{conj1} and \Cref{conj2} are equivalent.

\begin{conjecture}[\Cref{conj1}, restated]
\label{conj2}
A class $\MM$ of matroids has bounded branch-depth if and only if
there exists a class $\MN$ of matroids with bounded contraction-deletion-depth such that
$\MM$ is a subclass of the minor closure of $\MN$.
\end{conjecture}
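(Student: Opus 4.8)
The plan is to establish the two implications \Cref{conj1}$\Rightarrow$\Cref{conj2} and \Cref{conj2}$\Rightarrow$\Cref{conj1}; since \Cref{conj2} is only a reformulation of \Cref{conj1} in the language of matroid classes, each implication is a short unwinding of definitions that mirrors the proof of \Cref{thm:class} with every occurrence of ``$\FF$-representable'' deleted and with \Cref{thm:bound} replaced by the conjectured bound $f$.

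For \Cref{conj1}$\Rightarrow$\Cref{conj2} I would prove both directions of the equivalence asserted in \Cref{conj2}. The ``if'' direction does not even use \Cref{conj1}: if $\MM$ is contained in the minor closure of a class $\MN$ whose contraction-deletion-depth is bounded by a constant $c$, then every matroid of $\MN$ has branch-depth at most $c$ because $\cdd$ upper-bounds $\bd$~\cite[Theorem 5.2]{DevKO20}, and since $\bd$ is minor-monotone~\cite[Proposition 5.3]{DevKO20} the same bound holds throughout the minor closure of $\MN$, hence throughout $\MM$. For the ``only if'' direction, assume $\MM$ has branch-depth at most $d$; \Cref{conj1} then provides, for each $M\in\MM$, a matroid $N_M$ with $\cdd(N_M)\le f(d)$ having $M$ as a minor, and $\MN=\set{N_M : M\in\MM}$ is a class of contraction-deletion-depth at most $f(d)$ whose minor closure contains $\MM$ by construction.

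For the converse \Cref{conj2}$\Rightarrow$\Cref{conj1}, I would apply \Cref{conj2}, for each fixed $d$, to the class $\MM_d$ of all matroids of branch-depth at most $d$. This class has bounded branch-depth, so \Cref{conj2} yields a class $\MN_d$ of matroids with contraction-deletion-depth bounded by some constant $c_d$ such that $\MM_d$ lies in the minor closure of $\MN_d$; setting $f(d)=c_d$ then works, since any matroid $M$ of branch-depth $d$ belongs to $\MM_d$ and is therefore a minor of some $N\in\MN_d$ with $\cdd(N)\le c_d=f(d)$. The one nicety is the appeal to the (possibly proper) class $\MM_d$, which is harmless in the conventions used here; a scrupulous reader may instead run the argument with $\MM_d$ replaced by the set of all matroids of branch-depth at most $d$ on a fixed finite ground set and take a pointwise supremum over ground-set sizes.

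I do not expect a genuine obstacle in any of these steps: the equivalence is pure bookkeeping, essentially the proof of \Cref{thm:class} stripped of representability. The real difficulty lies not here but in proving either conjecture, i.e.\ in extending the embedding constructed in \Cref{thm:bound} from representable matroids to arbitrary matroids, which the present proofs of \Cref{lm:main} and \Cref{thm:main} do not address.
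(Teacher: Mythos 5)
Your proposal is correct and matches what the paper intends: since \Cref{conj2} is a conjecture, the only provable content is its equivalence with \Cref{conj1}, which the paper dismisses as ``easy to see,'' and you supply exactly that argument by mirroring the proof of Theorem~\ref{thm:class} with representability dropped and Theorem~\ref{thm:bound} replaced by the conjectured function $f$ (plus the routine converse direction of applying \Cref{conj2} to the class of all matroids of branch-depth at most $d$, with the set-theoretic point handled correctly). You also rightly note that the genuine difficulty is proving the conjectures themselves, not this bookkeeping equivalence.
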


We have not been able to construct representable matroids $M_i$ that
have $(d,r)$-decompo\-si\-tions and
there would not exist matroids $N_i$ with contraction-deletion-depth $\OO(dr)$ containing $M_i$ as a minor.
So, we pose the following as an open problem.

\begin{problem}
\label{prob}
Is every representable matroid $M$ with branch-depth at most $d$ a minor of
a (representable) matroid $N$ with contraction-deletion-depth at most $Kd^2+L$
for some $K$ and $L$ independent of $M$?
\end{problem}

\bibliographystyle{bibstyle}
\bibliography{cddrepr}

\end{document}